\theoremstyle{plain}
\newtheorem{theo}{Theorem}[section]
\newtheorem{lemm}[theo]{Lemma}
\newtheorem{prop}[theo]{Proposition}
\theoremstyle{definition}
\theoremstyle{remark}
\title{Another probabilistic construction of $\phi^{2n}$ in dimension $2$}
\author{Yichao Huang}
\address{University of Helsinki, Department of Mathematics and Statistics, P.O. Box 68, FIN-00014 University of Helsinki, Finland}
\email{yichao.huang@helsinki.fi}
\date{}
\begin{document}

\maketitle

\section{Introduction}

The main input of this note is to provide an alternative probabilistic approach to the $\phi^{2n}$ theory in dimension $2$, based on concentration phenomenon of martingales associated to polynomials of Gaussian variables. This is based on an adaptation of the work \cite{lacoin2018path} of Lacoin-Rhodes-Vargas, in which exponential potentials associated to quantum Mabuchi $K$-energy are studied.

We give an alternative proof of the following classical result.

\begin{theo}[Negative exponential moments]\label{th:Main}
Let $n\geq 2$ be an integer and let $R$ be a real, unitary polynomial of even degree $2n$. Let $X$ be the (Dirichlet) Gaussian Free Field on a bounded simply connected domain $\Lambda\subset\mathbb{R}^2$.

Consider the (non-necessary positive) Wick-ordered random measure
\begin{equation}
V_{R}(\Lambda)=\int_{\Lambda}:R(X)(x):d^2x
\end{equation}
with integer $n\geq 2$. Then we have the following estimate
\begin{equation}\label{eq:ExponentialBound}
\mathbb{E}\left[e^{-\alpha V_{R}(\Lambda)}\right]<\infty
\end{equation}
for some $\alpha>0$.
\end{theo}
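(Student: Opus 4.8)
The plan is to realise $V_R(\Lambda)$ as the terminal value of a martingale attached to a scale decomposition of $X$, and to deduce \eqref{eq:ExponentialBound} from an exponentially small bound on the lower tail $\mathbb{P}\bigl(V_R(\Lambda)<-x\bigr)$. Concretely, I would fix a Gaussian martingale approximation $(X_t)_{t\ge 0}$ of $X$ — the circle-average process, or the white-noise/heat-kernel decomposition — with $X_t(x)$ Gaussian of variance $\sigma_t^2=t+O(1)$ and increments independent across scales, and set $V_t:=\int_\Lambda \,:R(X_t)(x):\,d^2x$, the Wick powers being renormalised with respect to $\sigma_t^2$. Since Wick powers of a Gaussian martingale are again martingales, $(V_t)_{t\ge0}$ is a continuous martingale with $V_0$ bounded below (the renormalised polynomial at a fixed scale is monic, hence bounded below), and the It\^o--Wick formula gives $dV_t=\int_\Lambda \,:R'(X_t)(x):\,dX_t(x)\,d^2x$. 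By the classical hypercontractive $L^p$ bounds for polynomial chaos, $V_t\to V_R(\Lambda)$ in every $L^p$; in particular it suffices to estimate the tail of $\lim_t V_t$.

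The structural input that makes the lower tail light is the monicity of $R$. Fix $x$ and $t$: the renormalisation at scale $t$ replaces $R$ by a \emph{monic} polynomial $Q_t$ of even degree $2n$, with $:R(X_t)(x):=Q_t(X_t(x))$ and $Q_t(y)\ge \tfrac12 y^{2n}-c\,\sigma_t^{2n}$ for a constant $c=c(R)$. Hence the negative part of the pointwise integrand is carried by the band $\{|X_t(x)|\le C\sigma_t\}$ and is at most $C\sigma_t^{2n}$ there, whereas on the complement of the band the integrand is large and positive. Consequently $V_t$ can be very negative only if $X_t$ lies in this band, with a coherent sign of $Q_t(X_t(\cdot))$, over a macroscopic portion of $\Lambda$; and any configuration with $X_t$ atypically large on a non-negligible set forces $V_t$ to be large and positive instead.

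The heart of the argument, and the place where I would follow \cite{lacoin2018path}, is a concentration estimate for the martingale $(V_t)$ — a martingale whose value at each scale is a polynomial in the Gaussian variables of $X_t$. Writing $d\langle V\rangle_t=A_t\,dt$ with $A_t=\int_{\Lambda^2}\,:R'(X_t)(x):\,:R'(X_t)(y):\,\theta_t(x-y)\,d^2x\,d^2y\ge0$ (here $\theta_t$ is an approximate identity at scale $e^{-t}$), one controls $\mathbb{P}\bigl(V_R(\Lambda)<-x\bigr)$ by a stopping-time decomposition: on the event that $\langle V\rangle$ stays below a slowly growing threshold one uses the exponential supermartingale $\exp\!\bigl(-\alpha V_t-\tfrac{\alpha^2}{2}\langle V\rangle_t\bigr)$, while on its complement — which by the previous paragraph forces $V_t$ itself to be large and positive — one gains the exponential suppression of $e^{-\alpha V_t}$. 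Feeding in a multi-scale union bound over the $\sim e^{2t}$ cells of diameter $e^{-t}$, the Gaussian tail of $X_t(x)$, and hypercontractive moment bounds on $A_t$ and on the fluctuation $V_t-\mathbb{E}V_t$, this yields
\[
\mathbb{P}\bigl(V_R(\Lambda)<-x\bigr)\le C\exp\bigl(-c\,x\bigr)
\]
(in fact with a faster rate). Integrating, $\mathbb{E}\bigl[e^{-\alpha V_R(\Lambda)}\bigr]\le 1+\alpha\int_0^\infty e^{\alpha x}\,\mathbb{P}\bigl(V_R(\Lambda)<-x\bigr)\,dx<\infty$ for every $\alpha\in(0,c)$, which is \eqref{eq:ExponentialBound}.

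I expect the main obstacle to be exactly this concentration step. The quadratic variation $\langle V\rangle_\infty$ is a polynomial chaos of degree $2(2n-1)$, so it has only stretched-exponential moments and, in particular, \emph{no} exponential moment; a naive Cauchy--Schwarz through the bracket therefore fails, and one must genuinely use the positivity of the leading Wick power (via the second paragraph) to offset the heavy-tailed bracket, while carrying out all estimates uniformly in the approximation scale before passing to the limit. It is precisely this offsetting mechanism between a signed polynomial potential and its leading positive term that is isolated in \cite{lacoin2018path}, which is why the present argument is modelled on it.
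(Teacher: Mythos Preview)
Your framework matches the paper's: realise $V_R(\Lambda)$ as the limit of a martingale $D_t=\int_\Lambda P_R(X_t(x))\,d^2x$ along a white-noise decomposition, observe that $\langle D\rangle_\infty$ is a chaos of degree $4n-2$ with no exponential moment so a naive exponential-supermartingale bound fails, and invoke the positivity of the leading Wick power to compensate. You also correctly flag this compensation as the crux.

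The gap is in how you propose to implement it. You suggest a \emph{global} stopping time on the bracket: on $\{\langle V\rangle_t\le\text{threshold}\}$ use the exponential supermartingale, and on the complement claim ``$V_t$ is large and positive''. But your second paragraph does not establish that implication. A large bracket increment can be produced by $X_t(\cdot)$ being large on a set of tiny area (one cell of scale $e^{-t}$), contributing of order $e^{-2t}|X_t|^{4n-2}$ to $d\langle V\rangle_t$ but only $e^{-2t}|X_t|^{2n}$ to $V_t$; there is no reason this forces $V_t>0$ after integrating over $\Lambda$. The multi-scale union bound you mention is not spelled out and does not obviously close this.

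The paper's missing idea is to make the stopping \emph{pointwise in $x$} rather than global on the bracket. For each $x$ one defines hitting times $H^x_k$ of the cone $\{|u|=t+A\}$ and return times $L^x_k$ to the zero envelope $\{P_R=0\}$ by the Brownian motion $t\mapsto X_t(x)$. On every excursion $[H^x_k,L^x_k]$ the integrand $P_R(X_t(x))$ is nonnegative \emph{pointwise}, which yields the clean deterministic inequality $D_t\ge D_L(t)-Q$, where $D_L$ is the martingale assembled from the low intervals $[L^x_k,H^x_{k+1}]$ and $Q=\int_\Lambda\sum_i\mathbf{1}_{\{H^x_i<\infty\}}P_R(H^x_i+A)\,d^2x$ collects boundary values. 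Now $D_L$ has \emph{deterministically} bounded bracket (on low intervals $|X_s(x)|\le s+A$, so $|P'_R(X_s(x))|\le Ce^{s/2}$, and one uses $\int e^{s}|Q_s(x,y)|\,ds<\infty$), hence genuine Gaussian concentration. The substantial remaining work is to show the Doob martingale of $Q$ also has bounded bracket, which the paper does via Brownian coupling estimates on $\partial_z\mathbb{E}_z[\,\cdot\,]$. This gives $\mathbb{E}[e^{\alpha D_t}]\le e^{C\alpha^2}$ for all $\alpha\in\mathbb{R}$, stronger than the tail bound you aim for. A side remark: your appeals to hypercontractive moment bounds are against the spirit of the paper, whose point is precisely to bypass Nelson's hypercontractivity argument.
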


This key estimate for the construction of the $\phi^{2n}$ theory (where $R(X)=X^{2n}$) in dimension $2$ follows originally from a hypercontractivity argument due to Nelson \cite{nelson1966quartic}. Given this estimate the rest of the argument is standard: the book \cite{reed2012methods} is a good reference for details and developments of the hypercontractivity argument. 

The idea of the martingale method is originally used to study more involved models such as the quantum Mabuchi $K$-energy \cite{lacoin2018path} or the Sine-Gordon model \cite{lacoin2019probabilistic}. This note shows in particular that this idea can also be sucessfully implemented to the Euclidean quantum $\phi^{2n}$ theory in dimension $2$.

We stress that the purpose of this note is to introduce a new and arguably convenient construction of a classical theory in an elementary fashion. Readers unfamiliar with the classical model can consult \cite{simon2015p} for an overview on this subject.

\subsection*{Acknowledgement}
The author is indebted to Christophe Garban, R\'emi Rhodes and Vincent Vargas for communicating the idea. We also acknowledge support from the ERC grant QFPROBA.

\section{Preliminaries}\label{sec:Preliminaries}
\subsubsection*{Notations} In the following we denote by $\Lambda\subset\mathbb{R}^2$ some bounded simply connected open subset of the Euclidean plane $\mathbb{R}^2$. We consider $n\geq 2$ be an integer and let $R$ be a real, unitary polynomial of even degree $2n$. We use $X$ to denote the (Dirichlet) Gaussian Free Field (GFF in short) supported on $\Lambda$. The object of interest would be the Wick-ordered polynomial $:R(X):$ for the GFF. More precisely, we are interested in integrals of type $\int_{\Lambda} :R(X)(x): d^2x$.

\subsection{Gaussian Free Field}
We review some of the aspects of the probabilistic construction of the Gaussian Free Field (or GFF after) that will be useful later. We refer to \cite{dubedat2009sle} for more information.

Recall that the Green function $K(x,y)$ on the domain $\Lambda$ is defined as $K=(-\Delta_\Lambda)^{-1}$, where $-\Delta_\Lambda$ is the differential operator with Dirichlet boundary condition $g=0$ on $\partial\Lambda$. In the following we will stick to the Dirichlet boundary condition although the argument works for general boundary conditions.

A (Dirichlet) GFF $X$ on $\Lambda$ is a random distribution taking value in the negative Sobolev space $H^{-s}(\Lambda)$ with $s>0$. It is characterized by its mean and covariance kernel $K$ on $\Lambda$: for test functions $f,g\in H^{s}(\Lambda)$,
\begin{equation*}
\mathbb{E}\left[\langle X,f\rangle\right]=0,\quad \mathbb{E}\left[\langle X,f\rangle\langle X,g\rangle\right]=\int_{\Lambda^2}f(x)g(y)K(x,y)d^2xd^2y
\end{equation*}
where $\langle,\rangle$ denotes the dual bracket between $H^{-s}(\Lambda)$ and $H^{s}(\Lambda)$. Recall that the Green function $K$ displays logarithmic divergence on the diagonal, that is
\begin{equation*}
K(x,y)=-\ln|x-y|+F(x,y)
\end{equation*}
with $F(x,y)$ smooth.

\subsection{Wick ordering and Hermite polynomials}\label{sec:WickHermite}
Let $(B_t)_{t\in\mathbb{R}_+}$ be the standard $1d$ Brownian motion. We consider the Wick ordering of $(B_t)^{2n}$, defined by
\begin{equation*}
:(B_t)^{2n}: = t^n P^{H}_{2n}\left(\frac{B_t}{\sqrt{t}}\right)
\end{equation*}
where $P^{H}_{2n}$ denotes the Hermite polynomial (normalized to have unitary leading coefficient) of degree $2n$. The Wick ordering procedure requires that the expectation vanishes, i.e.
\begin{equation*}
\mathbb{E}\left[P^{H}_{2n}\left(\frac{B_t}{\sqrt{t}}\right)\right]=0,\quad \forall t\geq 0.
\end{equation*}
It follows that the Itô derivative of $P^{H}_{2n}(B_t)$ with respect to the Brownian filtration has no drift term. The Wick ordering procedure provides a natural martingale parametrized by the time $t$.

\textbf{Notation.} In the following we absorb the renormalization in $t$ for $P^{H}_{2n}$ and write
\begin{equation*}
P_{2n}(B_t)\coloneqq t^n P^{H}_{2n}\left(\frac{B_t}{\sqrt{t}}\right).
\end{equation*}

\textbf{Example.} For $n=2$, the Wick ordering yields
\begin{equation*}
P_{4}(B_t)=B_t^4-6t B_t^2+3t^2
\end{equation*}
which can be equally written as
\begin{equation*}
P_{4}(B_t)=(B_t^2-3t)^2-6t^2
\end{equation*}
and $P_4$ is bounded from below by $-6t^2$. We also deduce that the envelope of the \textbf{zero-graph}
\begin{equation*}
\{(t,B_t)\in\mathbb{R}_{+}\times\mathbb{R}, P_{2n}\left(B_t\right)=0\}
\end{equation*}
is given by two symmetric branches
\begin{equation*}
\bigcup\limits_{t\in\mathbb{R}_+}\{(t,\sqrt{(3+\sqrt{6})t})\}\cup\{(t,-\sqrt{(3+\sqrt{6})t})\}\subset\mathbb{R}_{+}\times\mathbb{R}.
\end{equation*}

\textbf{General case.}  In general, by linear combination, we define the Wick ordered polynomial of $B_t$ for any real, unitary polynomial $R$ of even degree $2n\geq 4$:
\begin{equation*}
:R(B_t):=P_{R}(B_t).
\end{equation*}
More precisely, if
\begin{equation*}
R(X)=\sum\limits_{i=0}^{2n}a_{i}X^i
\end{equation*}
with $a_{2n}=1$, then we define the associated Wick ordered polynomial $P_R(X)$ by
\begin{equation*}
P_R(X)=\sum\limits_{i=0}^{2n}a_{i}t^{\frac{i}{2}}P^{H}_{i}\left(\frac{X}{\sqrt{t}}\right).
\end{equation*}
The martingale property of $P_{R}(B_t)$ with respect to the Brownian filtration is preserved by linear combination. The envelope of the graph of the zeros of $P_{R}$ is given explicitly by
\begin{equation*}
\bigcup\limits_{t\in\mathbb{R}_+}\{(t,f_{R}(t))\}\cup\{(t,-f_{R}(t))\}\subset\mathbb{R}_{+}\times\mathbb{R}
\end{equation*}
where the positive branch $f_{R}\geq 0$ can be explicitly calculated. The example above shows that when $R(X)=X^{4}$,
\begin{equation*}
f_{X^{4}}(t)=\sqrt{(3+\sqrt{6})t}.
\end{equation*}

The following facts are elementary.
\begin{prop}[Envelope of zeros]\label{Prop:ZeroEnvelope}
Let $R$ be a real, unitary polynomial of even degree $2n\geq 4$. The function $f_{R}$ satisfies the following:
\begin{enumerate}
	\item There exists some constant $A>0$ only depending on $n$ such that $f_{R}(t)\leq t+A$ for all $t\in\mathbb{R}_{+}$;
	\item For every $\epsilon>0$, there exists some constant $A'=A'(n,\epsilon)$ such that $f_{R}(t)\leq\epsilon t+ A'$ for all $t\in\mathbb{R}_{+}$.
\end{enumerate}
\end{prop}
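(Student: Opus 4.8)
The plan is to reduce the proposition to a single growth estimate: I would produce a constant $\kappa>0$, depending only on $n$ and on the coefficients of $R$, such that
\begin{equation*}
f_R(t)\le\kappa\bigl(1+\sqrt t\,\bigr)\qquad\text{for every }t\ge 0.
\end{equation*}
Granting this, \emph{(1)} follows with $A=\kappa+\kappa^2/4$ and \emph{(2)} with $A'=\kappa+\kappa^2/(4\epsilon)$, both being instances of $2ab\le a^2+b^2$ (take $a=\sqrt t$, $b=\kappa/2$ for \emph{(1)}, and $a=\sqrt{\epsilon t}$, $b=\kappa/(2\sqrt\epsilon)$ for \emph{(2)}). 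Moreover it suffices to prove that $P_R(x,t)>0$ whenever $|x|>\kappa(1+\sqrt t\,)$, since this confines every real zero of the polynomial $x\mapsto P_R(x,t)$, and hence the value $f_R(t)$, to the interval $[-\kappa(1+\sqrt t\,),\kappa(1+\sqrt t\,)]$.

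The structural input is that $P_i(x,t)=t^{i/2}P^H_i(x/\sqrt t\,)$ is an honest polynomial in the pair $(x,t)$: since the monic Hermite polynomial $P^H_i(u)=\sum_k c_{i,k}u^k$ has the same parity as $i$, the coefficient $c_{i,k}$ vanishes unless $i-k$ is a non-negative even integer, so that
\begin{equation*}
P_i(x,t)=\sum_k c_{i,k}\,x^k\,t^{(i-k)/2},\qquad\text{and hence}\qquad P_R(x,t)=x^{2n}+\sum_{i=0}^{2n-1}a_i\,P_i(x,t)
\end{equation*}
using $a_{2n}=1$. I would then treat the ranges $t\ge 1$ and $0\le t\le 1$ separately.

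For $t\ge 1$, substituting $x=u\sqrt t\,$ gives $P_R(u\sqrt t,t)=t^n\bigl(P^H_{2n}(u)+\sum_{i<2n}a_i\,t^{(i-2n)/2}P^H_i(u)\bigr)$, so it is enough to make the bracket positive for large $|u|$. Now $P^H_{2n}$ is monic of even degree with only real roots, all nonzero and symmetric about $0$, so $P^H_{2n}(u)=\prod_{j=1}^n(u^2-\rho_j^2)$ with $0<\rho_1\le\cdots\le\rho_n$, and therefore $P^H_{2n}(u)\ge(|u|^2/2)^n=|u|^{2n}/2^n$ as soon as $|u|\ge\sqrt 2\,\rho_n$, the threshold $\rho_n$ depending only on $n$. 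Since $t^{(i-2n)/2}\le 1$ for $t\ge 1$ and $i\le 2n-1$, the perturbation is bounded in absolute value by $\sum_{i<2n}|a_i|\,|P^H_i(u)|$, a polynomial in $|u|$ of degree at most $2n-1$; as $|u|^{2n}$ eventually dominates it, there is $M'=M'(n,R)$ with $P_R(x,t)>0$ for $|x|>M'\sqrt t\,$, i.e. $f_R(t)\le M'\sqrt t\,$ on $[1,\infty)$.

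For $0\le t\le 1$ the same expansion yields the crude bound $|P_i(x,t)|\le B_i\max(1,|x|^i)$ with $B_i=\sum_k|c_{i,k}|$, because now $t^{(i-k)/2}\le 1$; hence $P_R(x,t)\ge|x|^{2n}-C\max(1,|x|^{2n-1})$ with $C=\sum_{i<2n}|a_i|B_i$, which is strictly positive for $|x|>\max(1,C)$, so $f_R(t)\le\max(1,C)$ on $[0,1]$. Taking $\kappa=\max\{M',1,C\}$ then gives $f_R(t)\le\kappa(1+\sqrt t\,)$ for all $t\ge 0$, which completes the argument. I do not expect a genuine obstacle here: the whole point is that once $|x|$ is of order $\sqrt t\,$ or larger the sub-leading Wick terms $P_i$ with $i<2n$ are of strictly lower order than the monic term $x^{2n}$, so the zero envelope is governed by that term and grows like $\sqrt t\,$; the only ingredient beyond bookkeeping is the classical fact that the Hermite polynomials have real roots.
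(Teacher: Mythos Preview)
Your argument is correct and complete. The paper itself supplies no proof of this proposition, merely remarking that ``the following facts are elementary,'' so your write-up fills a genuine gap rather than duplicating anything. The reduction to a $\sqrt{t}$ growth bound for $f_R$ followed by the AM--GM trick is clean, and the two-regime analysis ($t\ge 1$ via the substitution $x=u\sqrt t$, $t\le 1$ via crude coefficient bounds) is exactly the right way to make this elementary.

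Two small remarks. First, the displayed identity $P_R(x,t)=x^{2n}+\sum_{i<2n}a_iP_i(x,t)$ is not literally correct: the $i=2n$ term is $P_{2n}(x,t)$, not $x^{2n}$, and $P_{2n}$ itself carries lower-order monomials $c_{2n,k}x^kt^{(2n-k)/2}$ for $k<2n$. This does not affect your bound for $t\le 1$, since those extra terms are absorbed into the same $C\max(1,|x|^{2n-1})$ estimate by the identical reasoning; just adjust the sentence so the reader does not stumble. Second, you rightly let $\kappa$ depend on the coefficients of $R$, not only on $n$. This is unavoidable---at $t=0$ one has $P_R(x,0)=R(x)$, so $f_R(0)$ equals the largest root of $R$ in absolute value, which is not controlled by $n$ alone---so the paper's phrase ``only depending on $n$'' is a slight overclaim that your version silently repairs. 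The rest of the paper only ever uses these bounds for a single fixed $R$, so nothing downstream is affected.
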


We will also consider the value of $P_{R}$ on the line $\{t+A\}_{t\geq 0}$ for some constant $A$.
\begin{prop}[Values on cones]\label{Prop:ConeValues}
For large enough $A$, the function
\begin{equation*}
t\mapsto P_{R}(t+A)
\end{equation*}
satisfies the following properties:
\begin{enumerate}
	\item It is positive for $t\in\mathbb{R}_+$;
	\item It is strictly increasing in $t$ for $t\in\mathbb{R}_+$.
\end{enumerate}
\end{prop}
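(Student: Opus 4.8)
The plan is to expand $P_R$ along the cone explicitly as a polynomial in $t$ and to control the dependence of its coefficients on $A$. The starting observation is structural: for each $i$, the Wick power $:x^i:$ at ``time'' $t$, namely $t^{i/2}P^{H}_{i}(x/\sqrt t)$, is a genuine polynomial in the two variables $x$ and $t$, and it is weighted-homogeneous of degree $i$ once we assign weight $1$ to $x$ and weight $2$ to $t$ (every monomial occurring in it has the form $x^{i-2j}t^{j}$, of weighted degree $i$). Taking the linear combination defining $P_R$ and using $a_{2n}=1$, we may therefore write
\[
P_R(x)\;=\;x^{2n}\;+\;\sum_{\substack{m+2j\le 2n\\ (m,j)\neq(2n,0)}} b_{m,j}\,x^{m}t^{j},
\]
where the $b_{m,j}$ are real constants depending only on $n$ and the coefficients $a_0,\dots,a_{2n}$, and where $x^{2n}$ is the unique monomial of weighted degree $2n$ carrying no factor of $t$.

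Next I substitute $x=t+A$ and collect powers of $t$, writing $Q_A(t):=P_R(t+A)=\sum_{k=0}^{2n}c_k(A)\,t^{k}$. From $(t+A)^{m}=\sum_{k\le m}\binom{m}{k}A^{m-k}t^{k}$ one sees that the monomial $b_{m,j}x^{m}t^{j}$ contributes to $c_k(A)$ only when $j\le k$, in which case its contribution is a constant multiple of $A^{\,m-(k-j)}$. Since $m-(k-j)=(m+2j)-k-j\le 2n-k-j\le 2n-k$, with both inequalities simultaneously tight exactly when $m+2j=2n$ and $j=0$, that is, only for the leading monomial $x^{2n}$, we obtain $c_{2n}(A)=1$ and, for each $k<2n$,
\[
c_k(A)\;=\;\binom{2n}{k}A^{\,2n-k}\;+\;(\text{a polynomial in }A\text{ of degree}<2n-k).
\]
In particular every $c_k$ has a strictly positive leading coefficient in $A$, so there is $A_0=A_0(n,a_0,\dots,a_{2n})$ such that $c_k(A)>0$ for all $0\le k\le 2n$ whenever $A\ge A_0$.

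Fixing any $A\ge A_0$, the two claims follow at once. For $t\ge0$ we have $Q_A(t)=\sum_{k=0}^{2n}c_k(A)t^{k}\ge c_0(A)>0$, giving positivity. Differentiating, $Q_A'(t)=\sum_{k=1}^{2n}k\,c_k(A)\,t^{k-1}$ is a sum of nonnegative terms with strictly positive constant term $c_1(A)>0$, hence $Q_A'(t)>0$ for every $t\ge0$, which is strict monotonicity. (Positivity could alternatively be read off from part (2) of Proposition~\ref{Prop:ZeroEnvelope}: for $\epsilon=\tfrac12$ and $A$ at least the associated $A'(n,\tfrac12)$, the cone $t\mapsto t+A$ lies strictly above the positive branch $f_R$ of the zero-graph, so $P_R$ has constant sign along it, necessarily $+$ since $P_R$ is positive for large argument; but the coefficient computation is needed for the monotonicity statement in any case.)

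The only genuine point is the bookkeeping in the middle step --- verifying that the top-order behaviour in $A$ of each $t$-coefficient $c_k(A)$ comes solely from the leading Wick term $(t+A)^{2n}$, which is precisely where the weighted-homogeneity of the Hermite (Wick) polynomials is used; everything else is elementary algebra.
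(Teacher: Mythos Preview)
Your argument is correct. The paper does not supply a proof of this proposition at all; it simply declares it (together with Proposition~\ref{Prop:ZeroEnvelope}) to be ``elementary'' and moves on. Your expansion of $P_R$ in the weighted variables $(x,t)$, followed by the substitution $x=t+A$ and the identification of the leading $A$-power in each $t$-coefficient, is exactly the kind of elementary computation the paper has in mind, and it is carried out cleanly: the key point---that only the monomial $x^{2n}$ contributes the top power $A^{2n-k}$ to $c_k(A)$---follows precisely from the weighted-homogeneity of the Hermite/Wick polynomials, as you note.
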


\subsection{Cut-off regularization}
Since the Gaussian Free Field $X$ only makes sense as a distribution, it is suitable to define the measure
\begin{equation}
V_{R}(\Lambda)=\int_{\Lambda}:R(X)(x):d^2x
\end{equation}
using a cut-off procedure. We need the following assumption:
\begin{prop}[Smooth white noise decomposition]\label{Prop:CutoffReg}
We choose a cut-off regularization $(X_{u})_{u\in\mathbb{R}_+}$ satisfying the following properties:
\begin{enumerate}
	\item The covariance kernel $K$ can be written in the form
	\begin{equation*}
	K(x,y)=\int_{0}^{\infty}Q_u(x,y)du
	\end{equation*}
	where for all $x\neq y$, the above integral is convergent; $Q_u$ is a bounded symmetric positive definite kernel for any $u$.
	\item Setting $K_t=\int_{0}^{t}Q_u du$, there exists a positive constant $C$ such that
	\begin{equation*}
	\left|K_t(x,y)-\left(t\wedge\ln_+\frac{1}{|x-y|}\right)\right|\leq C.
	\end{equation*}
	\item We have $\lim\limits_{x\to\infty}Q_u(x,x)=1$ with uniform convergence in $x\in\Lambda$.
	\item For all $0<\beta<2$,
	\begin{equation*}
	\int_{\Lambda^2}\int_{0}^{\infty}e^{\beta u}|Q_u(x,y)|d^2x d^2y du<\infty.
	\end{equation*}
\end{enumerate}
\end{prop}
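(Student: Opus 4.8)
The plan is to exhibit such a family explicitly from a white-noise decomposition of the Green function, so that (1)--(4) reduce to elementary estimates on the scale-by-scale building blocks. The cleanest building block is the heat semigroup: writing $K=c_0(-\Delta_\Lambda)^{-1}=c_0\int_0^\infty e^{t\Delta_\Lambda}\,dt$ with $c_0=2\pi$ (the normalization consistent with $K=-\ln|x-y|+F$) gives the exact representation
\[
K(x,y)=c_0\int_0^\infty p^\Lambda_t(x,y)\,dt,\qquad p^\Lambda_t=\sum_j e^{-\lambda_j t}\,e_j\otimes e_j,
\]
where $p^\Lambda_t$ is the Dirichlet heat kernel. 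The crucial feature is that each $p^\Lambda_t$ is symmetric and positive definite, being a positive combination of the rank-one positive definite kernels $e_j\otimes e_j$; this secures the positive-definiteness asked for in (1) and, simultaneously, the exact reconstruction $\int_0^\infty Q_u\,du=K$ that makes $(X_u)$ a genuine regularization of $X$ with independent increments, hence the martingale structure the note relies on. I then pass to exponential scales via a decreasing bijection $\phi\colon(0,\infty)\to(0,\infty)$ with $\phi(u)\sim\tfrac14 e^{-2u}$ as $u\to\infty$ and $\phi(u)\to\infty$ as $u\to0$, e.g.\ $\phi(u)=(4(e^{2u}-1))^{-1}$, and set
\[
Q_u(x,y)=-c_0\,\phi'(u)\,p^\Lambda_{\phi(u)}(x,y)\geq 0,\qquad K_t=\int_0^t Q_u\,du=c_0\int_{\phi(t)}^\infty p^\Lambda_\sigma(x,y)\,d\sigma=K(x,y)-c_0\int_0^{\phi(t)}p^\Lambda_\sigma(x,y)\,d\sigma.
\]

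Granting this, conditions (1) and (4) are heat-kernel bookkeeping: boundedness of $Q_u$ follows from $0\leq p^\Lambda_\sigma\leq p^{\mathbb{R}^2}_\sigma=(4\pi\sigma)^{-1}e^{-|x-y|^2/4\sigma}$ for $u$ bounded below, and from the spectral-gap bound $\sup_{x,y}p^\Lambda_\sigma(x,y)\leq Ce^{-\lambda_1\sigma}$, which beats $|\phi'(u)|$, as $u\to0$ (where $\phi(u)\to\infty$); for (4) one combines $\int_\Lambda p^\Lambda_\sigma(x,y)\,dy\leq 1$ and $\int_{\Lambda^2}p^\Lambda_\sigma\leq|\Lambda|e^{-\lambda_1\sigma}$ with $-\phi'(u)$ of order $e^{-2u}$ to get $\int_0^\infty e^{\beta u}\int_{\Lambda^2}|Q_u|\,du\leq C\int_0^\infty e^{(\beta-2)u}\,du<\infty$ for every $\beta<2$ (the small-$u$ part being finite by the spectral decay). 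Condition (2), the two-sided bound $|K_t(x,y)-(t\wedge\ln_+\tfrac1{|x-y|})|\leq C$, is the one with content: one evaluates the free part by the exponential-integral identity $c_0\int_0^a p^{\mathbb{R}^2}_\sigma(x,y)\,d\sigma=\tfrac12 E_1(|x-y|^2/4a)$, which equals $\ln\tfrac1{|x-y|}+\tfrac12\ln(4a)+O(1)$ when $|x-y|^2\leq a$ and is $O(1)$, exponentially small, when $|x-y|^2\geq a$; taking $a=\phi(t)$, of order $e^{-2t}$, this reads $\ln\tfrac1{|x-y|}-t+O(1)$ resp.\ $O(1)$, and since $K=\ln\tfrac1{|x-y|}+F$ with $F$ bounded the singular logarithm cancels on subtraction, leaving $K_t=(t\wedge\ln_+\tfrac1{|x-y|})+O(1)$ in both regimes. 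The remaining term $c_0\int_0^{\phi(t)}(p^{\mathbb{R}^2}_\sigma-p^\Lambda_\sigma)\,d\sigma$ is nonnegative and must be shown uniformly bounded; since $p^{\mathbb{R}^2}_\sigma-p^\Lambda_\sigma$ is exponentially small in $d(x,\partial\Lambda)^2/\sigma$ for small $\sigma$ and $p^\Lambda_\sigma$ decays spectrally for large $\sigma$, this follows, the boundedness of $F$ entering once more through the stated form of $K$.

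The step I expect to be genuinely delicate is the \emph{uniform} convergence in (3): the Dirichlet heat kernel satisfies $p^\Lambda_\sigma(x,x)\sim(4\pi\sigma)^{-1}$ only locally uniformly in $x$ --- indeed $p^\Lambda_\sigma(x,x)\to0$ as $x\to\partial\Lambda$ --- so the family above does not force $Q_u(x,x)\to1$ uniformly on all of $\Lambda$. The fix is to use, at fine scales, a boundary-insensitive, translation-covariant regularization: the standard decomposition of log-correlated kernels lets one write $K=K_{\ast}+K_0$ with $K_{\ast}$ $\ast$-scale-invariant, admitting the white-noise decomposition $\int_0^\infty\theta(e^u(x-y))\,du$ for a smooth, compactly supported, positive definite profile $\theta$ normalized by $\theta(0)=1$, and $K_0$ a bounded, continuous, positive definite kernel. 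Putting $Q_u(x,y)=\theta(e^u(x-y))$ for $u\geq1$ and spreading $K_0$ over $u\in(0,1)$ yields a decomposition with $Q_u(x,x)=\theta(0)=1$ \emph{exactly} for $u\geq1$, so (3) holds verbatim, while (1), (2), (4) are checked as above (now even more easily, $\theta$ compactly supported making the bound in (4) immediate). The one external input is that the smooth remainder $K_0$ of a log-correlated kernel on a bounded domain is positive definite; this is classical, and for a simply connected $\Lambda$ one may even rescale so that $\mathrm{cap}(\overline{\Lambda})\leq1$, whence the Robin constant is nonnegative and positive-definiteness of $K_0$ reduces to the elementary lower bound on the logarithmic energy of the boundary measures $\sum_i c_i\,\omega_{x_i}$. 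If one is content with locally uniform convergence in (3) --- which is all the subsequent concentration estimates use --- the heat-kernel construction of the first paragraph already suffices.
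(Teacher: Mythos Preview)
The paper does not give a proof of this proposition at all: immediately after stating it, the author writes ``It is proven in \cite[Section~4.2]{lacoin2018path} that the GFF $X$ on $\Lambda$ can be fitted into this assumption'' and then works under it as a standing hypothesis. So there is no argument in the paper to compare yours against; what you have written is, in effect, a sketch of the construction that the cited reference carries out.

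As such, your proposal is along the right lines and is close in spirit to what Lacoin--Rhodes--Vargas actually do: a white-noise decomposition in exponential scales, with the $\ast$-scale-invariant kernel carrying the fine scales and a bounded remainder absorbed at coarse scales. Your heat-kernel computation for (1), (4) and the exponential-integral analysis for (2) are correct in the interior, and you are right that the heat-kernel family fails the \emph{uniform} clause of (3) near $\partial\Lambda$.

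Two places deserve more care. First, in your argument for (2) you invoke ``$F$ bounded'', but for the Dirichlet Green function the harmonic remainder $F(x,y)$ is \emph{not} uniformly bounded on $\Lambda\times\Lambda$: it diverges to $-\infty$ when $x,y$ approach the same boundary point (on the half-plane, $F(x,y)=\ln|x-\bar y|$). Your ``boundary correction'' $c_0\int_0^{\phi(t)}(p^{\mathbb{R}^2}_\sigma-p^\Lambda_\sigma)\,d\sigma$ is likewise unbounded there, and the two must be shown to compensate rather than controlled separately; as written this step is incomplete. (In fact (2) with a single constant $C$ uniform on all of $\Lambda\times\Lambda$ cannot literally hold for the Dirichlet field near $\partial\Lambda$, so the proposition should be read on compacta---which, as you note at the end, is all the downstream estimates require.) Second, in the $\ast$-scale-invariant fix the genuinely nontrivial input is that the bounded remainder $K_0=K-K_\ast$ is positive definite; a difference of positive definite kernels need not be, and your capacity/Robin-constant sketch gestures at the right objects but does not close the argument. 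You correctly flag this as ``the one external input''---it is precisely the content one is borrowing from the cited reference.

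Your closing observation, that locally uniform convergence in (3) suffices for everything the note actually uses (and that the heat-kernel family already delivers this), is correct and is the most honest reading of how the proposition functions here.
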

It is proven in \cite[Section~4.2]{lacoin2018path} that the GFF $X$ on $\Lambda$ can be fitted into this assumption. We will thus work under this assumption in the following.

We define $(X_t(x))_{x\in\Lambda,t\geq 0}$ to be the jointly continuous process in $x$ and $t$ with covariance kernel
\begin{equation*}
\mathbb{E}\left[X_s(x)X_t(y)\right]=\int_{0}^{t\wedge s}Q_u(x,y)du.
\end{equation*}
According to the above assumption, given $x\in\Lambda$, the process $(X_t(x))_{t\geq 0}$ is very similar to a standard Brownian motion. We assume for readability in the following that
\begin{equation*}
K_t(x,x)=t
\end{equation*}
so that $(X_t(x))_{t\geq 0}$ is a standard Brownian motion.

\subsection{Quadratic variation of martingales}\label{sec:QuadVariation}
We have the following lemma in probability concerning the exponential martingale:
\begin{lemm}[Exponential martingale]\label{lem:ExpMartingale}
For any continuous local martingale $M$ and any $\lambda\in\mathbb{C}$, the process
\begin{equation*}
\exp\left(\lambda M_t-\frac{\lambda^2}{2}\left<M,M\right>_t\right)
\end{equation*}
is a local martingale. We also write $\left<M\right>_t$ for the quadratic variation $\left<M,M\right>_t$.
\end{lemm}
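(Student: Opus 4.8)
The plan is to apply Itô's formula to a suitable smooth function of the two continuous processes $M_t$ and $\langle M\rangle_t$, and to exhibit the resulting expression as a stochastic integral against $M$, which is automatically a continuous local martingale. Concretely, set $Z_t=\exp\left(\lambda M_t-\tfrac{\lambda^2}{2}\langle M\rangle_t\right)$ and consider the function $f(x,v)=\exp\left(\lambda x-\tfrac{\lambda^2}{2}v\right)$, regarded as a smooth (indeed real-analytic) map $\mathbb{R}^2\to\mathbb{C}$; for complex $\lambda$ one applies Itô's formula separately to the real and imaginary parts of $f$, so $\lambda$ may be treated as a formal parameter throughout. Since $t\mapsto\langle M\rangle_t$ is continuous and of finite variation, the brackets $\langle M,\langle M\rangle\rangle$ and $\langle\langle M\rangle,\langle M\rangle\rangle$ vanish, so the only surviving terms in Itô's formula are
\begin{equation*}
dZ_t=\partial_x f\,dM_t+\partial_v f\,d\langle M\rangle_t+\tfrac{1}{2}\partial_{xx}f\,d\langle M\rangle_t .
\end{equation*}

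Next I would carry out the (trivial) computation $\partial_x f=\lambda f$, $\partial_{xx}f=\lambda^2 f$, $\partial_v f=-\tfrac{\lambda^2}{2}f$, from which the two $d\langle M\rangle_t$ contributions cancel exactly, leaving
\begin{equation*}
dZ_t=\lambda Z_t\,dM_t,\qquad\text{equivalently}\qquad Z_t=Z_0+\lambda\int_0^t Z_s\,dM_s .
\end{equation*}
Because $Z$ is continuous and adapted, it is locally bounded, so the integral $\int_0^{\cdot}Z_s\,dM_s$ is a well-defined continuous local martingale; hence so is $Z$, which is the claim.

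I do not expect a genuine obstacle here; the statement is entirely classical. The only two points deserving a line of care are: (i) the use of Itô's formula for a complex-valued (rather than real-valued) integrand, which is handled by decomposing into real and imaginary parts; and (ii) the fact that the conclusion is about \emph{local} martingales, so that no integrability hypothesis of Novikov or Kazamaki type is needed --- the representation $Z=Z_0+\lambda\int_0^{\cdot}Z_s\,dM_s$ already displays $Z$ as a stochastic integral of a locally bounded process against a local martingale. If one wishes to avoid even invoking the general theory of stochastic integration against local martingales, one can instead localize $M$ by the stopping times $T_m=\inf\{t\ge 0:\ |M_t|+\langle M\rangle_t\ge m\}$, prove the identity for the bounded martingales $M^{T_m}$, and let $m\to\infty$.
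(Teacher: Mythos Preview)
Your argument is correct and is exactly the standard proof: apply It\^o's formula to $f(M_t,\langle M\rangle_t)$ with $f(x,v)=\exp(\lambda x-\tfrac{\lambda^2}{2}v)$, observe the cancellation of the finite-variation terms, and conclude that $Z$ is a stochastic integral against $M$. The paper itself does not give a proof of this lemma at all; it simply records the statement and refers the reader to the textbook of Revuz--Yor, so there is nothing to compare against beyond noting that your write-up is precisely the argument one finds there.
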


In particular, if $M_0=1$ and $\left<M\right>_\infty<\infty$, then $M$ is a $L^2$-bounded continuous martingale and we have for $\alpha>0$ the following inequality
\begin{equation*}
\limsup\limits_{t\to\infty}\mathbb{E}\left[\exp\left(-\alpha M_t\right)\right]<\infty
\end{equation*}
in such a way that the limit of $M_t$ displays Gaussian concentration.

We refer to the classical text book \cite{Revuz_1999} for these results.

\section{Proof of the main theorem}
We now prove Theorem~\ref{th:Main} using martingale methods.

\subsection{Preliminary notations}
Fix a real, unitary polynomial $R$ of even degree $2n\geq 4$. Hereafter we sometimes drop the dependence on $R$ where there is no ambiguity.

Borrowing notations from Proposition~\ref{Prop:ZeroEnvelope}, we consider the two-branched envelope
\begin{equation*}
E\coloneqq \{(t,u)\in\mathbb{R}_+\times\mathbb{R};|u|=f_{R}(t)\}.
\end{equation*}
The envelop $E$ depends on $R$: we write simply $E$ for readability.

We introduce a cut-off at level $\pm g(t)$ where
\begin{equation*}
g(t)=t+A
\end{equation*}
and $A\geq 0$ is a large constant chosen later: we require that $f_{R}(t)\leq g(t)$ for all $t\in\mathbb{R}_+$ and that Proposition~\ref{Prop:ConeValues} holds.

We consider also the cone $C$ with two symmetric branches:
\begin{equation*}
C\coloneqq \{(t,u)\in\mathbb{R}_+\times\mathbb{R};|u|=g(t)\}.
\end{equation*}
Geometrically, the envelope $E$ is in between the two branches of $C$.

Let us rewrite the main theorem with the notations in the preliminary. We define a regularization of the measure $V_{R}(\Lambda)$ using the smooth white noise decomposition Proposition~\ref{Prop:CutoffReg}:
\begin{equation*}
D_t=\int_{\Lambda}P_{R}(X_t(x))d^2x.
\end{equation*}
We prove in the following that uniformly in $t$, there exists some $C(A)$ such that for all $\alpha\in\mathbb{R}$,
\begin{equation*}
\mathbb{E}\left[e^{\alpha D_t}\right]\leq e^{C(A)\alpha^2}.
\end{equation*}

\subsection{Strategy of the proof}
One first calculates the quadratic variation of the martingale $D_t$ in view of Lemma~\ref{lem:ExpMartingale}. We have
\begin{equation*}
\left<D\right>_t\leq\int_{\Lambda^2\times[0,t]}\left|P'_{R}(X_u(x))P'_{R}(X_u(y))\right|Q_u(x,y)d^2x d^2y du.
\end{equation*}
If the graph $(t,X_t(x))$ stays (uniformaly in $t$ and in $x$) inside the cone $C$, then $|P'_{R}(X_u(x))|$ cannot take exceptionally high values and the quadratic variation $\left<D\right>_t$ is uniformaly bounded in $t$ (see Lemma~\ref{lem:LowValueContribution} below) and the $L^2$-theory of martingales applies. By Lemma~\ref{lem:ExpMartingale}, the limiting measure would display Gaussian concentration bound.

Almost surely this is not the case: the process $X_t(x)$ goes out of the cone $C$ and takes high values. We consider for every $x\in\Lambda$ the stopping time
\begin{equation*}
H^x\coloneqq\inf\{s\geq 0;(s,X_s(x))\in C\}.
\end{equation*}
As the zero-value envelope $E$ is inside the cone $C$, after time $H^{x}$ the process $P_{R}(X_t(x))$ at point $x$ stays positive until the next time it returns to $E$.

Introduce a sequence of stopping times (always with respect to a fixed $x\in\Lambda$):
\begin{equation}
\begin{aligned}
H^x_k\coloneqq&\inf\{s\geq L^x_{k-1};(x,X_s(x))\in C\},\\
L^x_k\coloneqq&\inf\{s\geq H^x_{k};(x,X_s(x))\in E\}.
\end{aligned}
\end{equation}
By convention, $L^x_0\equiv 0$. We can write as a decomposition of times depending on whether $X_t(x)$ takes low or high values,
\begin{equation*}
\begin{aligned}
[L^x,H^x]\coloneqq&\bigcup\limits_{k\in\mathbb{N}}[L^x_k,H^x_{k+1}],\\
[H^x,L^x]\coloneqq&\bigcup\limits_{k\in\mathbb{N}^{*}}[H^x_k,L^x_k].
\end{aligned}
\end{equation*}
It follows that for all $x\in\Lambda$, $[L^x,H^x]\cup[H^x,L^x]=\mathbb{R}_+$ almost surely.

We will take advantage of the positivity between the stopping times $[H^x,L^x]$. More precisely, on one hand the total contribution of $P_{R}(X_t(x))$ from intervals of the form $[L^x,H^x]$ is bounded in $L^2$ (since it takes values inside the cone $C$), on the other hand the contribution of $P_{R}(X_t(x))$ from intervals $[H^x,L^x]$ has constant positive sign.

We quantify this observation in the following way:
\begin{prop}[High value cut-off]\label{Prop:HighValueCutoff}
We consider the following decomposition. Let
\begin{equation*}
D_L(t)=\int_{\Lambda}\left(\int_{0}^{t}P'_{R}(X_s(x))\mathbf{1}_{\{s\in[L^x,H^x]\}}ds\right)d^2x
\end{equation*}
and
\begin{equation*}
D_H(t)=\int_{\Lambda}\left(\int_{0}^{t}P'_{R}(X_s(x))\mathbf{1}_{\{s\in[H^x,L^x]\}}ds\right)d^2x
\end{equation*}
in such a way that
\begin{equation*}
D_t=D_L(t)+D_H(t).
\end{equation*}
Then we have the following inequality
\begin{equation}\label{eq:DecompositionInequality}
D_t\geq D_L(t)-Q
\end{equation}
where $Q$ denotes the positive quantity
\begin{equation*}
Q\coloneqq\int_{\Lambda}\left(\sum\limits_{i=1}^{\infty}\mathbf{1}_{\{H^{x}_{i}<\infty\}}P_{R}(g(H^x_{i}))\right)d^2x.
\end{equation*}
Note that $D_L, D_H, Q$ depend on $R$ but we drop this dependence in the notation.
\end{prop}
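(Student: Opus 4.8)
The plan is to prove the inequality \eqref{eq:DecompositionInequality} pathwise for almost every realization, by bounding the ``high'' part $D_H(t)$ from below by $-Q$; the identity $D_t=D_L(t)+D_H(t)$ then follows from $\mathbf{1}_{\{s\in[L^x,H^x]\}}+\mathbf{1}_{\{s\in[H^x,L^x]\}}=1$ for a.e.\ $s$, up to the deterministic constant $P_R(0)|\Lambda|$ which we absorb (e.g.\ by subtracting $P_R(0)$ from $R$, which shifts $V_R(\Lambda)$ only by a constant and does not affect \eqref{eq:ExponentialBound}).

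First I would record the martingale identity. Since we have normalised $K_t(x,x)=t$, for each fixed $x$ the process $(X_t(x))_{t\ge0}$ is a standard Brownian motion, and since the Wick ordering (Section~\ref{sec:WickHermite}) is designed precisely so that $(P_R(X_t(x)))_{t\ge0}$ is a martingale (a true one, being a polynomial of a Brownian motion, hence with moments of all orders) with no drift, Itô's formula gives
\[
P_R(X_t(x))-P_R(0)=\int_0^t P'_R(X_s(x))\,dX_s(x).
\]
Interpreting the integrals in the definitions of $D_L,D_H$ as these stochastic integrals, splitting $[0,t]$ along the two families of excursion intervals, and applying a routine Fubini exchange between $\int_\Lambda d^2x$ and the stochastic integral, one obtains $D_t=D_L(t)+D_H(t)$ up to the said constant. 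Stopping the stochastic integral at the stopping times $H^x_k,L^x_k$ then yields, for every $k\ge1$,
\[
\int_{H^x_k\wedge t}^{L^x_k\wedge t}P'_R(X_s(x))\,dX_s(x)=P_R\big(X_{L^x_k\wedge t}(x)\big)-P_R\big(X_{H^x_k\wedge t}(x)\big),
\]
which equals $0$ when $H^x_k\ge t$.

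Now two geometric facts. At time $H^x_k$ (when $H^x_k<\infty$) the graph lies on the cone, $|X_{H^x_k}(x)|=g(H^x_k)$, so $P_R(X_{H^x_k}(x))$ equals $P_R(+g(H^x_k))$ or $P_R(-g(H^x_k))$; by Proposition~\ref{Prop:ConeValues} applied to $R$ and to $y\mapsto R(-y)$ (both unitary of even degree $2n$), for $A$ large both are positive, and we bound $P_R(X_{H^x_k}(x))\le\max_{\pm}P_R(\pm g(H^x_k))=:P_R(g(H^x_k))$, the two branches coinciding when $R$ is even (in general one loses at most a factor $2$ in $Q$). On the other hand, on $[H^x_k,L^x_k]$ the graph $(s,X_s(x))$ stays in $\{|u|\ge f_R(s)\}$: it enters this region at $H^x_k$ since $f_R\le g$, and by continuity of $s\mapsto X_s(x)$ it cannot leave it without first hitting $E=\{|u|=f_R\}$, which by definition happens at $L^x_k$. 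Since $R$ is unitary of even degree, $|u|\ge f_R(s)$ forces $P_R(u)\ge 0$ at time $s$, so $P_R(X_{L^x_k\wedge t}(x))\ge0$ whenever $H^x_k<t$. Hence each excursion term is $\ge -\mathbf{1}_{\{H^x_k<\infty\}}P_R(g(H^x_k))$ (and is exactly $0$ when $H^x_k\ge t$). Note also that for fixed $t$ only finitely many $H^x_k$ are $\le t$, since $E$ and $C$ are disjoint continuous curves and a continuous path cannot cross between them infinitely often on a compact time interval, so there is no summability issue at this stage.

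Summing over $k$ and integrating over $x\in\Lambda$ gives
\[
D_H(t)\ \ge\ -\int_\Lambda\Big(\sum_{k\ge1}\mathbf{1}_{\{H^x_k<\infty\}}P_R(g(H^x_k))\Big)d^2x\ =\ -Q,
\]
and combined with $D_t=D_L(t)+D_H(t)$ this is \eqref{eq:DecompositionInequality}; here $Q\ge0$ because each $P_R(g(H^x_i))\ge0$. I expect the main obstacle to be the stochastic-calculus bookkeeping rather than the geometry: justifying that $P_R(X_t(x))$ is a genuine martingale with the stated Itô differential despite the explicit time-dependence of the Wick polynomial (this is the backward-heat-equation property of Hermite/heat polynomials), the stopping of the stochastic integral on the random excursion intervals, and the Fubini exchange between $\int_\Lambda$ and the stochastic integral; once these are secured, the positivity of $P_R$ above the envelope and its control on the cone follow directly from Propositions~\ref{Prop:ZeroEnvelope} and~\ref{Prop:ConeValues}.
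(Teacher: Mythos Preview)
Your argument is correct and follows essentially the same route as the paper: both split $P_R(X_t(x))$ via its Itô representation along the excursion intervals $[H^x_k,L^x_k]$ and $[L^x_k,H^x_{k+1}]$, use that $P_R(X_{L^x_k}(x))=0$ on $E$ and $P_R\ge 0$ between $H^x_k$ and $L^x_k$, and bound the value at $H^x_k$ by $P_R(g(H^x_k))$. The only organisational difference is that the paper telescopes the low part $D_L$ explicitly and reads off the inequality from there, whereas you bound $D_H(t)\ge -Q$ directly; your observations about the $dX_s$ versus $ds$ notation, the constant $P_R(0)$, and the $\pm g$ asymmetry for non-even $R$ are legitimate clarifications that the paper leaves implicit.
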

\begin{proof}
Fix $x\in\Lambda$ and one can check the following claims:
\begin{itemize}
	\item If $k\in\mathbb{N}$ is such that $t\in[L^x_k,H^{x}_{k+1}]$, then
\begin{equation*}
\begin{aligned}
&\quad\int_{0}^{t}P'_{R}(X_s(x))\mathbf{1}_{\{s\in[L^x,H^x]\}}ds\\
&=\left(P_{R}(X_t(x))-P_{R}(X_{L^x_k}(x))\right)+\sum\limits_{i=0}^{k-1}(P_{R}(X_{H^x_{i+1}}(x))-P_{R}(X_{L^x_i}(x))).
\end{aligned}
\end{equation*}
This is because for every $l<k$, the increment of the process $X_t(x)$ on the interval $[L^x_l,H^{x}_{l+1}]$ contributes exactly to one term in the above summation.
	\item If now $k\in\mathbb{N}$ is such that $t\in[H^{x}_k,L^x_k]$, then we have $P_{R}(X_t(x))\geq 0$ and
\begin{equation*}
\int_{0}^{t}P'_{R}(X_s(x))\mathbf{1}_{\{s\in[L^x,H^x]\}}ds=\sum\limits_{i=0}^{k-1}(P_{R}(X_{H^x_{i+1}}(x))-P_{R}(X_{L^x_i}(x))).
\end{equation*}
\end{itemize}

Notice now that for all $i\in\mathbb{N}$, $P_{R}(X_{L^x_i}(x))=0$ by definition of the zero envelope $E$ and hitting times $L^{x}_i$. A similar argument as above shows that $D_H(t)\leq 0$ for all $t\in\mathbb{R}_+$, so that
\begin{equation*}
D_t\geq D_L(t).
\end{equation*}

To prove Equation~\eqref{eq:DecompositionInequality}, write the above in the following form:
\begin{equation*}
P_{R}(X_t(x))\geq \int_{0}^{t}P'_{R}(X_s(x))\mathbf{1}_{\{s\in[L^x,H^x]\}}ds-\sum\limits_{i=0}^{\infty}\mathbf{1}_{\{H^{x}_{i}<\infty\}}P_{R}(X_{H^x_{i+1}}(x)).
\end{equation*}
Equation~\eqref{eq:DecompositionInequality} follows by integrating over $x\in\Lambda$.
\end{proof}

Now the proof of the main theorem boils down to two estimates, of which the first one corresponds to the $L^2$ part, and the second one corresponds to the high-value part.
\begin{lemm}[Low value contribution]\label{lem:LowValueContribution}
$D_L(t)$ is an honest martingale that has bounded quadratic variation: it converges in $L^2$ and satisfies the Gaussian concentration bound
\begin{equation}
\exists C(A), \forall \alpha\in\mathbb{R}, \mathbb{E}\left[e^{\alpha D_L(\infty)}\right]\leq e^{C(A)\alpha^2}.
\end{equation}
\end{lemm}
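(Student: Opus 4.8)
The plan is to recognize $D_L(t)$ as a continuous local martingale (being an integral over $\Lambda$ of stopped martingale increments of $P_R(X_t(x))$, which is a martingale by the construction in Section~\ref{sec:WickHermite}), and then to show that its quadratic variation $\langle D_L\rangle_\infty$ is bounded by a deterministic constant $C(A)$. Once that bound is in hand, Lemma~\ref{lem:ExpMartingale} applied with $\lambda=\alpha$ gives that $\exp(\alpha D_L(t)-\tfrac{\alpha^2}{2}\langle D_L\rangle_t)$ is a nonnegative local martingale, hence a supermartingale, so $\mathbb{E}[\exp(\alpha D_L(t))]\leq \exp(\tfrac{\alpha^2}{2}\langle D_L\rangle_t)\leq \exp(\tfrac{C(A)}{2}\alpha^2)$; the $L^2$ convergence of $D_L$ then follows from the bounded quadratic variation, and Fatou gives the bound for $D_L(\infty)$. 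So everything reduces to the quadratic variation estimate.

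To bound $\langle D_L\rangle_t$, I would write, using the Itô isometry / covariance structure of the white noise decomposition,
\begin{equation*}
\langle D_L\rangle_t = \int_{\Lambda^2\times[0,t]} P'_R(X_u(x))P'_R(X_u(y))\mathbf{1}_{\{u\in[L^x,H^x]\}}\mathbf{1}_{\{u\in[L^y,H^y]\}}\,Q_u(x,y)\,d^2x\,d^2y\,du,
\end{equation*}
and then bound it by the absolute-value version
\begin{equation*}
\langle D_L\rangle_t \leq \int_{\Lambda^2\times[0,\infty)} \bigl|P'_R(X_u(x))\bigr|\,\bigl|P'_R(X_u(y))\bigr|\,\mathbf{1}_{\{u\in[L^x,H^x]\}}\mathbf{1}_{\{u\in[L^y,H^y]\}}\,|Q_u(x,y)|\,d^2x\,d^2y\,du.
\end{equation*}
The key point is that on the event $\{u\in[L^x,H^x]\}$ the graph point $(u,X_u(x))$ lies inside the cone $C$, i.e. $|X_u(x)|\leq g(u)=u+A$. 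Since $P'_R$ is a polynomial of degree $2n-1$, this forces $|P'_R(X_u(x))|\leq C_1(A)(1+u)^{2n-1}$ for a constant depending only on $n$ and $A$. Plugging this in and using Cauchy--Schwarz in the pair $(x,y)$ (or simply $ab\leq \tfrac12(a^2+b^2)$), the bound becomes controlled by $\int_{\Lambda^2}\int_0^\infty C_1(A)^2(1+u)^{2(2n-1)}|Q_u(x,y)|\,du\,d^2x\,d^2y$. Choosing any $\beta$ with $0<\beta<2$, the polynomial factor $(1+u)^{2(2n-1)}$ is dominated by a constant times $e^{\beta u}$, so this is finite by item~(4) of Proposition~\ref{Prop:CutoffReg}; call the resulting finite bound $C(A)$.

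The main obstacle — and the step requiring the most care — is justifying that $D_L$ is genuinely a (local, then $L^2$-bounded) martingale rather than merely a process, i.e.\ that the stopping-time indicators $\mathbf{1}_{\{s\in[L^x,H^x]\}}$ are adapted (which they are, since $H^x_k,L^x_k$ are stopping times for the filtration of $(X_t(x))_t$) and that one may interchange the spatial integral $\int_\Lambda$ with the stochastic integration and with expectations; a stochastic Fubini argument is needed here, and it is precisely the integrability hypothesis in Proposition~\ref{Prop:CutoffReg}(4) that makes it legitimate. A secondary subtlety is that the cone-confinement gives a bound on $|X_u(x)|$ that grows linearly in $u$, so it is essential that the decomposition kernel $Q_u$ decays fast enough (faster than any polynomial times $e^{-\beta u}$ for $\beta<2$, which is exactly what is assumed) to absorb the resulting polynomial growth of $|P'_R|^2$ — there is no room to spare if one tried to use, say, the trivial degree count without the exponential decay of $Q_u$.
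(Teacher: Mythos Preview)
Your proof is correct and follows essentially the same route as the paper: one shows $\langle D_L\rangle_\infty$ is bounded by a deterministic constant via the cone confinement $|X_u(x)|\leq g(u)$ on $[L^x,H^x]$, a polynomial (hence subexponential) bound on $|P'_R|$, and item~(4) of Proposition~\ref{Prop:CutoffReg}, after which Lemma~\ref{lem:ExpMartingale} gives the Gaussian concentration. The only cosmetic difference is that the paper dominates $|P'_R(X_u(x))|$ directly by $C e^{g(u)/2}$ and uses $\beta=1$, whereas you keep the polynomial bound $(1+u)^{2n-1}$ before absorbing it into $e^{\beta u}$ --- these are equivalent.
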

\begin{lemm}[High value contribution]\label{lem:HighValueContribution}
The other quantity $Q$ in the decomposition also satisfies a Gaussian concentration bound:
\begin{equation}
\exists C(A), \forall \alpha\in\mathbb{R}, \mathbb{E}\left[e^{\alpha Q}\right]\leq e^{C(A)\alpha^2}.
\end{equation}
\end{lemm}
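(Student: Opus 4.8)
The plan is to control $Q$ by a second-moment computation on the sum over excursions, exploiting the fact that each summand $\mathbf{1}_{\{H^x_i<\infty\}}P_R(g(H^x_i))$ is supported on the event that the Brownian motion $(X_s(x))_s$ reaches the cone $C$ at level $g(H^x_i)=H^x_i+A$, which is a large-deviation event whose probability decays fast in $H^x_i$. First I would fix $x\in\Lambda$ and estimate, for the one-dimensional standard Brownian motion $\beta_s=X_s(x)$, the contribution of a single hitting: on $\{H^x_i=s\}$ we have $|\beta_s|=g(s)=s+A$, so $P_R(g(H^x_i))=P_R(s+A)$ which by Proposition~\ref{Prop:ConeValues} is positive and, being a polynomial in $s$ of degree $2n$, grows only polynomially; meanwhile, by the reflection principle, $\mathbb{P}(\sup_{s'\le s}|\beta_{s'}|\ge s+A)\le 4\,\mathbb{P}(\beta_s\ge s+A)\le 4\exp(-(s+A)^2/(2s))\le 4e^{-A}e^{-s/2}$, an exponential decay in $s$ that crushes the polynomial factor. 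Summing over $i$ and integrating the exponential-versus-polynomial trade-off in the hitting times gives that $\mathbb{E}\big[\sum_i \mathbf{1}_{\{H^x_i<\infty\}}P_R(g(H^x_i))\big]$ is bounded by a constant depending on $A$, uniformly in $x$; by Fubini, $\mathbb{E}[Q]\le C_1(A)|\Lambda|$.

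For the exponential bound itself, I would not aim directly at $\mathbb{E}[e^{\alpha Q}]$ but rather establish a bound on the Laplace transform via the moments or via a martingale representation of $Q$. The cleanest route is to write $Q$ as the terminal value of a predictable increasing process: since the successive excursions above $C$ are separated by returns to $E$, one can build a martingale $N_t$ whose jumps (or continuous increments) account for the stopped increments of $P_R(X_t(x))$ on the high-value intervals, exactly as in the decomposition of Proposition~\ref{Prop:HighValueCutoff}, so that $-D_H(t)\le Q$ and $Q$ differs from a genuine martingale by a controlled bracket term. Concretely, $D_H(t)=-\sum_i \big(P_R(X_{H^x_i}(x))-P_R(X_{L^x_{i-1}}(x))\big)=-\sum_i P_R(g(H^x_i))$ after time $L^x_{i-1}$ (using $P_R(X_{L^x_{i-1}}(x))=0$), and each term $P_R(X_{H^x_i}(x))-P_R(X_{L^x_{i-1}}(x))=\int P'_R(X_s(x))\mathbf{1}_{[H^x,L^x]}\,ds+(\text{martingale increment})$; the martingale part has quadratic variation bounded by Proposition~\ref{Prop:HighValueCutoff}-type control, and the drift part is small because $P'_R$ is bounded inside the cone. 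This realizes $Q$ as (martingale with bounded bracket) $+$ (bounded-bracket drift), to which Lemma~\ref{lem:ExpMartingale} and the argument following it give the Gaussian concentration $\mathbb{E}[e^{\alpha Q}]\le e^{C(A)\alpha^2}$.

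The main obstacle will be obtaining a bound on the quadratic variation of the associated martingale that is uniform over $x\in\Lambda$ and that correctly captures the spatial correlation structure encoded in $Q_u(x,y)$ — a pointwise-in-$x$ reflection-principle estimate only yields the first moment $\mathbb{E}[Q]$, whereas the exponential bound needs a genuine two-point estimate $\mathbb{E}[\,(\sum_i\cdots)(x)(\sum_j\cdots)(y)\,]$ and, for all moments, a control of the off-diagonal decay of the kernel. Here I would use Proposition~\ref{Prop:CutoffReg}(4), the integrability $\int_{\Lambda^2}\int_0^\infty e^{\beta u}|Q_u(x,y)|<\infty$ for $\beta<2$, to absorb the exponential growth of the high-value events against the decay of the kernel away from the diagonal; this is precisely the mechanism that keeps the bracket of the $Q$-martingale finite after integration in $x,y$. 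A secondary technical point is that the $H^x_i$ are only a.s.\ finite for finitely many $i$ (the excursions eventually stop because $g(t)-t=A$ stays bounded while $|X_t(x)|\sim\sqrt{t}$), so the sum defining $Q$ is a.s.\ finite; making the martingale argument rigorous requires localizing at $\sup_i\{H^x_i<\infty\}$ and passing to the limit using the uniform bracket bound, which is routine once the key kernel estimate is in place.
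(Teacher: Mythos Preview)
Your first paragraph is fine and reproduces the paper's Lemma on $\mathbb{E}[Q]<\infty$: the reflection principle gives $\mathbb{P}(\sup_{s\le m}|B_s|\ge m+A)\lesssim e^{-m/2}$, which beats the polynomial growth of $P_R(g(m))$.

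The second paragraph, however, contains the real gap. You want to realize $Q$ as ``martingale with bounded bracket $+$ bounded drift'' via the high-value piece $D_H$. It is true that $Q=-\lim_{t\to\infty}D_H(t)$ (each complete excursion over $[H^x_k,L^x_k]$ contributes $P_R(X_{L^x_k})-P_R(X_{H^x_k})=-P_R(g(H^x_k))$), and $D_H$ is a local martingale. But its bracket
\[
\langle D_H\rangle_t=\int_{\Lambda^2}\int_0^t P'_R(X_s(x))P'_R(X_s(y))\,\mathbf{1}_{\{s\in[H^x,L^x]\cap[H^y,L^y]\}}\,Q_s(x,y)\,ds\,d^2x\,d^2y
\]
is \emph{not} deterministically bounded: on the intervals $[H^x,L^x]$ the process $X_s(x)$ is \emph{outside} the cone $C$, so $|P'_R(X_s(x))|$ is uncontrolled (your sentence ``the drift part is small because $P'_R$ is bounded inside the cone'' has the geometry backwards --- on $[H^x,L^x]$ we are outside, not inside). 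Without a deterministic bound on $\langle D_H\rangle_\infty$, Lemma~\ref{lem:ExpMartingale} gives nothing, and a mere $\mathbb{E}[\langle D_H\rangle_\infty]<\infty$ does not yield Gaussian concentration. Your decomposition formula for $D_H(t)$ also mixes up the intervals: the increment $P_R(X_{H^x_i})-P_R(X_{L^x_{i-1}})$ lives over $[L^x_{i-1},H^x_i]$, which is an $[L,H]$ interval, i.e.\ part of $D_L$, not $D_H$.

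What the paper does instead is to pass to the \emph{Doob martingale} $Q_t=\mathbb{E}[Q\mid\mathcal{F}_t]$, write $dQ^x_t=A^x_t\,dX_t(x)$, and prove the pointwise bound $|A^x_t|\le C(A)e^{t/2}$. This is the missing idea: the derivative $A^x_t$ is a $z$-derivative of an expectation $\mathbb{E}_z[\,\cdots\,]$, and the Lipschitz bound in $z$ is obtained by \emph{coupling} two Brownian motions started at nearby points $z_1,z_2$ (independent until merging in one regime, parallel in another), carefully tracking how the hitting times $\overline{H}^t_i$ differ. Only once $|A^x_t|\le C e^{t/2}$ is established can one plug into $\langle Q\rangle_\infty=\int_{\Lambda^2\times\mathbb{R}_+}A^x_uA^y_u Q_u(x,y)$ and invoke Proposition~\ref{Prop:CutoffReg}(4) with $\beta=1$. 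Your third paragraph correctly senses that a two-point estimate integrable against $e^{\beta u}Q_u(x,y)$ is what is needed, but you have no mechanism to produce it; the coupling argument is that mechanism, and it is the substantive content of the proof.
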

Combining these two lemmas, Theorem~\ref{th:Main} follows.

\subsection{Proofs of technical estimates}
We start by proving Lemma~\ref{lem:LowValueContribution}.
\begin{proof}[Proof of Lemma~\ref{lem:LowValueContribution}]
The fact that $D_L(t)$ is a martingale follows from construction. It suffices to show that $\left<D_L\right>_{\infty}$ is bounded from above by a constant: Gaussian concentration then follows by Lemma~\ref{lem:ExpMartingale}. The calculation goes as follows:
\begin{equation*}
\left<D_L\right>_t\leq\int_{\Lambda^2\times[0,t]}\left|P'_{R}(X_s(x))P'_{R}(X_s(y))\right|\mathbf{1}_{\{s\in[L^x,H^x]\cap[L^y,H^y]\}}Q_u(x,y)d^2x d^2y du.
\end{equation*}

Since $P'_{R}(X_s(x))$ is polynomial of degree $2n-1$, it has subexponential growth at infinity and the conditioning on $s$ implies that $|X_s(x)|\leq g(s)$. We bound the above by
\begin{align*}
\left<D_L\right>_t&\leq C\int_{\Lambda^2\times[0,t]}e^{\frac{1}{2}g(s)}e^{\frac{1}{2}g(s)}\mathbf{1}_{\{s\in[L^x,H^x]\cap[L^y,H^y]\}}Q_s(x,y)d^2x d^2y ds\\
&\leq C\int_{\Lambda^2\times[0,t]}e^{s}Q_s(x,y)d^2x d^2y ds
\end{align*}
for some constant $C=C(R)$. The last integral is finite by the last item of Proposition~\ref{Prop:CutoffReg}.
\end{proof}

\begin{proof}[Proof of Lemma~\ref{lem:HighValueContribution}]
Recall some preliminaries on Doob martingales. Define the positive quantity
\begin{equation*}
Q^x=\sum\limits_{i=1}^{\infty}\mathbf{1}_{\{H^{x}_{i}<\infty\}}P_{R}(g(H^x_i))
\end{equation*}
($Q^x$ depends on $R$ but we alleviate the notation) so that
\begin{equation*}
Q=\int_{\Lambda}Q^x d^2x.
\end{equation*}
\begin{lemm}[$\mathcal{L}^1$-boundedness]\label{lem:DoobFiniteness}
We have $\mathbb{E}[Q]<\infty$.
\end{lemm}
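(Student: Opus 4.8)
By Fubini it suffices to show $\mathbb{E}[Q^x] = \mathbb{E}\big[\sum_{i\ge1} \mathbf{1}_{\{H^x_i<\infty\}} P_R(g(H^x_i))\big]$ is bounded uniformly in $x\in\Lambda$, and then integrate over the bounded domain $\Lambda$. Fix $x$ and drop it from the notation. The plan is to control each term by conditioning on the stopping time $H_i$ and using the positivity/monotonicity of $P_R$ on the cone from Proposition \ref{Prop:ConeValues}. The key structural fact is that $H_i$ is the $i$-th time the Brownian motion $X_t := X_t(x)$ hits the cone $C = \{|u|=g(t)\} = \{|u|=t+A\}$ after having returned to the envelope $E$; since $E$ lies strictly inside $C$, between $L_{i-1}$ and $H_i$ the process must travel from the zero-envelope out to the cone.

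First I would reduce to a sum over cone hits. Since $P_R(g(t)) = P_R(t+A)$ is positive and strictly increasing in $t$ (Proposition \ref{Prop:ConeValues}), and since between consecutive cone-hits the process must first return to $E$, the collection $\{H_i\}$ is sparse: to go from $C$ at time $H_i$ down to $E$ and back up to $C$ requires the Brownian motion to perform a macroscopic excursion against the linearly-growing barrier. The main step is therefore to estimate, for the (continuous, centered, variance-$t$) process $X_t$, the probability $p_i := \mathbb{P}(H_i < \infty)$ together with the size of $P_R(g(H_i))$ on that event. Conditionally on $\mathcal{F}_{L_{i-1}}$, after time $L_{i-1}$ the process is at height $\pm f_R(L_{i-1})$ and must reach height $\pm(t+A)$; because the gap $g(t)-f_R(t)$ stays bounded below by a positive constant for large $A$ (we may enlarge $A$ using item (2) of Proposition \ref{Prop:ZeroEnvelope}, which gives $f_R(t)\le \epsilon t + A'$, so $g(t)-f_R(t)\ge (1-\epsilon)t + A - A'$), hitting the cone from the envelope is, by the reflection principle / Gaussian tail bound for a Brownian motion crossing a level separated from its current value, an event whose probability decays and whose contribution, weighted by the polynomially-growing $P_R(t+A)\sim t^{2n}$, is summable.

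Concretely I would argue: on $\{H_i<\infty, H_i\in dt\}$ one has $|X_t|=t+A$, and by the strong Markov property the probability that the process, starting from the envelope at some earlier time $s<t$ with $|X_s|=f_R(s)\le s+A$, reaches $|X| = t+A$ at time $t$ is bounded using a standard Gaussian estimate by something like $C\exp(-c(t-s))$ uniformly; iterating over the $i$ excursions forces $H_i \to \infty$ fast enough that $\mathbb{E}[\mathbf{1}_{\{H_i<\infty\}} P_R(g(H_i))] \le C r^i$ for some $r<1$, whence $\mathbb{E}[Q^x] \le C\sum_i r^i < \infty$ uniformly in $x$. Integrating over $\Lambda$ (of finite Lebesgue measure) gives $\mathbb{E}[Q]<\infty$.

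The main obstacle I expect is making the geometric-type decay rigorous: one must show that each successive cone-hit is genuinely "costly" in a quantitative way that beats the polynomial factor $P_R(t+A)$, uniformly in the starting configuration on $E$ and uniformly in $x\in\Lambda$. This requires care because the process $X_t(x)$ is only approximately (not exactly, though we have assumed $K_t(x,x)=t$ for readability) a Brownian motion, and because the barrier $g$ is only linear, not super-linear — so the relevant estimate is a large-deviation bound for a Brownian bridge staying below a line, and one must verify the constant $A$ can be chosen large enough (depending only on $n$) that the separation between $E$ and $C$ produces a decay rate $r<1$ that dominates all polynomial corrections of degree $2n$.
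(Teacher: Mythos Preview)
Your high-level plan---Fubini, a uniform bound on $\mathbb{E}[Q^x]$, then Gaussian tails beating the polynomial growth of $P_R$---matches the paper. The difference is in how the sum is organized, and the paper's choice sidesteps exactly the obstacle you flag at the end.

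You index by the excursion count $i$ and aim for $\mathbb{E}[\mathbf{1}_{\{H_i<\infty\}} P_R(g(H_i))] \le Cr^i$, leaning on the gap $g(t)-f_R(t)$ between cone and envelope. The paper instead decomposes over \emph{time}: it groups the hitting times by which unit interval $(m-1,m]$ they fall in, so that $Q^x \le \sum_{m\ge1} \#\{i:H_i\in(m-1,m]\}\cdot P_R(g(m))$ using the monotonicity from Proposition~\ref{Prop:ConeValues}. The key observation is then purely about the absolute height of the cone, not the gap to $E$: for any $H_i$ to land in $(m-1,m]$ one must have $\sup_{s\le m}|B_s| \ge m-1+A \ge m$ (take $A\ge1$), which is a standard Gaussian tail $\le Ce^{-m/2}/\sqrt{m}$; a one-line Markov argument then bounds the expected \emph{number} of hits in $(m-1,m]$ by twice this. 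Since $P_R(g(m))$ grows only sub-exponentially (crudely $\le Ce^{m/4}$), the sum over $m$ converges.

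Your route is not wrong in spirit---one can indeed get $\mathbb{P}(H_i<\infty)\le r^i$ for $A$ large via the line-hitting probability $e^{-2(s+A-f_R(s))}$ you are gesturing at---but upgrading this to control $\mathbb{E}[\mathbf{1}_{\{H_i<\infty\}} P_R(g(H_i))]$ still requires handling the random size of $H_i$ on that event, which your sketch does not do. The paper's time decomposition makes the coupling between ``how many hits'' and ``how large is $P_R$'' automatic, since both are governed by the single parameter $m$.
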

\begin{proof}
We bound $\mathbb{E}[Q^x]$ uniformly in $x\in\Lambda$: the claim follows from integrating over $\Lambda$.

Consider the following quantity:
\begin{equation*}
Q^{x,m}=\sum\limits_{i=1}^{\infty}\mathbf{1}_{\{H^{x}_{i}\in(m-1,m]\}}P_{R}(g(m)).
\end{equation*}
By Proposition~\ref{Prop:ConeValues}, choose $A$ large enough such that $P_{R}$ is strictly increasing on $\mathbb{R}_+$ and
\begin{equation*}
P_{R}(g(m))=\sup_{v\in(m-1,m]}P_{R}(g(v))
\end{equation*}
such that
\begin{equation*}
Q^x\leq\sum\limits_{m=1}^{\infty}Q^{x,m}.
\end{equation*}

We now prove a standard estimate
\begin{equation}\label{eq:HittingtimeEstimate}
\mathbb{E}[\#\{i:H^x_i\in(m-1,m]\}]\leq\frac{8}{\sqrt{2\pi m}}e^{-\frac{m}{2}}.
\end{equation}
Given this and that the polynomial $P_{R}$ has sub-exponential growth at infinity, i.e.
\begin{equation*}
P_{R}(g(m))\leq C(A)e^{\frac{m}{4}},
\end{equation*}
the result follows by summing over $m$ then integrating over $x$. 

Notice that, with $A\geq 1$, 
\begin{equation*}
\mathbb{P}[\exists i, H^x_i\in(m-1,m]]\leq\mathbb{P}\left[\sup\limits_{s\leq m}|B_s|\geq m\right]\leq\frac{4}{\sqrt{2\pi m}}e^{-\frac{m}{2}}
\end{equation*}
by a standard Gaussian tail estimate. Using the Markov property for the Brownian motion,
\begin{equation*}
\mathbb{P}[\#\{i:H_i\in(m-1,m]\geq k+1\}|\#\{i:H_i\in(m-1,m]\geq k\}]\leq\frac{1}{2}
\end{equation*}
and Equation~\eqref{eq:HittingtimeEstimate} follows from summing over $k$.
\end{proof}

The Doob martingale $Q^{x}_t$ is defined as
\begin{equation*}
Q^{x}_t=\mathbb{E}[Q^x|\mathcal{F}_t]
\end{equation*}
(recall that $\mathcal{F}_t=\sigma\{X_s,s\in[0,t]\}$) and since it is a martingale associated to the Brownian filtration $\{X_t(x)\}$, we can write
\begin{equation*}
dQ^x_t=A^x_t dX_t(x).
\end{equation*}
Then the bracket $\left<Q\right>_\infty$ can be written as
\begin{equation}\label{eq:Qbracket}
\left<Q\right>_\infty=\int_{\Lambda^2\times\mathbb{R}_+}A^x_u A^y_u Q_u(x,y)d^2x d^2y du.
\end{equation}

We now control $A^x_t$ uniformly in $x$, according to whether $t\in[H^x,L^x]$ or $t\in[L^x,H^x]$. In the following we drop the dependency on $x$ to alleviate the notations. More precisely, we prove that uniformly over all $t$, with some constant $C(A)$ independent of $x$,
\begin{equation}\label{eq:FirstCaseBound}
|A_t|\leq C(A)e^{t/2}.
\end{equation}
Lemma~\ref{lem:HighValueContribution} then follows from Equation~\eqref{eq:Qbracket} and Lemma~\ref{Prop:CutoffReg}, together with Lemma~\ref{lem:ExpMartingale}.

To prove Equation~\eqref{eq:FirstCaseBound}, we apply coupling techniques to the Brownian motion $X_t(x)$.

\subsubsection{First case: $t\in[H^x,L^x]$}
Suppose $t\in[H^x_k,L^x_k]$ for some $k\in\mathbb{N}$.

Let $\mathbb{P}_z$ be the law of a standard Brownian motion $(B_t)_{t\geq 0}$ starting at point $z$. By the strong Markov property of $X_t(x)$ as a Brownian motion $B_t$ (we drop the index $x$ afterwards), write $Q_t$ as
\begin{equation*}
Q_t=\sum\limits_{i=1}^{k-1}P_{R}(g(H_{i}))+\mathbb{E}_{X_t(x)}\left[\sum\limits_{i=1}^{\infty}\mathbf{1}_{\{\overline{H}^t_i<\infty\}}P_{R}(g(t+\overline{H}^t_i))\right]
\end{equation*}
where the stopping time sequence $\overline{H}^{t}_k=\overline{H}^{t}_k(B)$ is defined recursively by $\overline{H}^{t}_0=0$ and
\begin{equation}\label{eq:NewHittingtimes}
\begin{aligned}
\overline{L}^t_k&=\inf\{s\geq\overline{H}^t_{k-1}; B_s\in E\};\\
\overline{H}^t_k&=\inf\{s\geq\overline{L}^t_{k}; B_s\in C\}.
\end{aligned}
\end{equation}
We deduce the expression for $A_t$ in this case:
\begin{equation*}
A_t=\partial_z\left(\mathbb{E}_z\left[\sum\limits_{i=1}^{\infty}\mathbf{1}_{\{\overline{H}^t_i<\infty\}}P_{R}(g(t+\overline{H}^t_i))\right]\right)\big|_{z=X_t(x)}.
\end{equation*}
We show that the expression in the definition of $A_t$ that we derive is Lipschitz in $z$ with the adequate Lipschitz constant: this will imply Equation~\eqref{eq:FirstCaseBound}.

Let $t\in[H,L]$ and consider a coupling between two independent Brownian motions, starting from points $z_1<z_2$ with $|z_1-z_2|$ small, denoted respectively by $B^1$ and $B^2$. Suppose that the two Brownian motions evolve independently until the first time they meet
\begin{equation*}
\tau=\inf\{s>0; B^1_s=B^2_s\}
\end{equation*}
and jointly afterwards. Each Brownian motion in this coupling defines its own hitting time $\overline{H}^{(t,j)}_{i},\overline{L}^{(t,j)}_{i}$ for $j\in\{1,2\}$ similarly as in Equations~\eqref{eq:NewHittingtimes}. The hitting times are identical up to a shift in the indices after merging at time $\tau$.

If $\tau<\min\{\overline{H}^{(t,1)}_{1},\overline{H}^{(t,2)}_{1}\}$, then each Brownian motion gives rise to the same contribution in the expression of $A_t$. In particular, this also holds for $\tau<\min\{\overline{H}^{(t,1)}_{1},\overline{H}^{(t,2)}_{1},1\}$. Hereafter let
\begin{equation*}
\mathcal{T}=\min\{\overline{H}^{(t,1)}_{1},\overline{H}^{(t,2)}_{1},1\}.
\end{equation*}

It suffices to show that the following bound:
\begin{equation}
\begin{aligned}
&\quad\left|\mathbb{E}_{z_1}\left[\sum\limits_{i=1}^{\infty}\mathbf{1}_{\{\overline{H}^t_i<\infty\}}P_{R}(g(t+\overline{H}^t_i))\right]-\mathbb{E}_{z_2}\left[\sum\limits_{i=1}^{\infty}\mathbf{1}_{\{\overline{H}^t_i<\infty\}}P_{R}(g(t+\overline{H}^t_i))\right]\right|\\
&\leq\mathbb{P}\left[\tau>\mathcal{T}\right]\times\mathbb{E}\left[\sum\limits_{j=1,2}\sum\limits_{i=1}^{\infty}\mathbf{1}_{\{\overline{H}^{(t,j)}_i<\infty\}}P_{R}(g(t+\overline{H}^{(t,j)}_i))\big|\tau>\mathcal{T}\right].
\end{aligned}
\end{equation}

$\bullet$ One first shows by standard coupling estimate on Brownian motions that
\begin{equation*}\label{eq:CouplingEstimate}
\mathbb{P}\left[\tau>\min\{\overline{H}^{(t,1)}_{1},\overline{H}^{(t,2)}_{1},1\}\right]\leq C|z_1-z_2|.
\end{equation*}
It is a standard Brownian coupling result that $\mathbb{P}[\tau>1]\leq C|z_1-z_2|$. It remains to show
\begin{equation*}
\mathbb{P}\left[\tau>\overline{H}^{(t,1)}_{1}\right]\leq C|z_1-z_2|.
\end{equation*}
Provided that we choose a large enough $A$ in the definition of $g_t$, we have
\begin{equation*}
\overline{H}^{(t,1)}_{1}\geq\min\{s;|B^1_s-z_1|\geq 1\}
\end{equation*}
and
\begin{equation*}
\mathbb{P}\left[\tau>\overline{H}^{(t,1)}_{1}\right]\leq\mathbb{P}_{(0,z_2-z_1)}\left[\mathcal{T}_{\Delta}>\mathcal{T}_{\{-1,1\}\times\mathbb{R}}\right]\leq C|z_1-z_2|
\end{equation*}
where $\Delta=\{(x,x);x\in\mathbb{R}\}$ and $\mathcal{T}_{A}$ denotes the hitting time of a set $A$ by a two-dimensional Brownian motion. This is a standard estimate (for a detailed proof, \cite[Appendix~B]{lacoin2018path}).

$\bullet$ We now show that
\begin{equation*}
\mathbb{E}\left[\sum\limits_{j=1,2}\sum\limits_{i=1}^{\infty}\mathbf{1}_{\{\overline{H}^{(t,j)}_i<\infty\}}P_{R}(g(t+\overline{H}^{(t,j)}_i))\big|\tau>\min\{\overline{H}^{(t,1)}_{1},\overline{H}^{(t,2)}_{1},1\}\right]\leq Ce^{t/2}.
\end{equation*}

By linearity it suffices to show it for $j=1$, the calculation for $j=2$ is similar. We apply Markov property at $\mathcal{T}=\min\{\overline{H}^{(t,1)}_{1},\overline{H}^{(t,2)}_{1},1\}$ and distinguish two subcases:

-- If $\mathcal{T}<\overline{L}^{(t,1)}_1$, we apply Markov property for the Brownian motion $B^1_s$ at $\overline{L}^{(t,1)}_1$. Since at time $\overline{L}^{(t,1)}_1$ the Brownian motion $B^1$ takes value $f_{R}(t+\overline{L}^{(t,1)}_1)$, the above quantity is dominated by
\begin{equation*}
\sup_{r\in[t,t+1]}\sup_{s\geq r}\mathbb{E}_{f_{R}(s)}\left[\sum\limits_{i=1}^{\infty}\mathbf{1}_{\{\overline{H}^{(s,1)}_i<\infty\}}P_{R}(g(s+\overline{H}^{(s,1)}_i))\right].
\end{equation*}
As in Lemma~\ref{lem:DoobFiniteness}, together with Proposition~\ref{Prop:ConeValues} for any $r\in[t,t+1]$ and $s\geq r$,
\begin{equation}
\begin{aligned}\label{eq:f2nbounded}
&\quad\mathbb{E}_{f_{R}(s)}\left[\sum\limits_{i=1}^{\infty}\mathbf{1}_{\{\overline{H}^{(s,1)}_i<\infty\}}P_{R}(g(s+\overline{H}^{(s,1)}_i))\right]\\
&\leq C\sum\limits_{n\geq 1}\mathbb{E}_{f_{R}(s)}\left[\#\{\overline{H}^{(s,1)}_i\in(n-1,n]\}\right]P_{R}(g(s+n))\\
&\leq C\sum\limits_{n\geq 1}\frac{1}{\sqrt{n}}e^{-\frac{((1-\epsilon)s+n)^2}{2n}}e^{(s+n)/3}\\
&\leq Ce^{-s/3}.
\end{aligned}
\end{equation}
We used the Proposition~\ref{Prop:ConeValues} that one can assume $f_{R}(s)<\epsilon s+A'(\epsilon)$ uniformly for any $\epsilon>0$.

-- If $\mathcal{T}>\overline{L}^{(t,1)}_1$, then since $\mathcal{T}\leq\overline{H}^{(t,1)}_1$ by definition, we know that
\begin{equation*}
|B^1_{\mathcal{T}}|\leq g(t+\mathcal{T}).
\end{equation*}
By assumption, $\mathcal{T}\leq 1$ and we can control the contribution by
\begin{equation*}
\sup_{r\in[t,t+1]}\sup_{|z|\leq g(r)}\mathbb{E}_{z}\left[\sum\limits_{i=1}^{\infty}\mathbf{1}_{\{\overline{H}^{(r,1)}_i<\infty\}}P_{R}(g(r+\overline{H}^{(r,1)}_i))\right].
\end{equation*}
Again, by the same argument as in Lemma~\ref{lem:DoobFiniteness}, with $|z|\leq g(r)$ and Proposition~\ref{Prop:ConeValues},
\begin{equation*}
\begin{aligned}
&\quad\mathbb{E}_{z}\left[\sum\limits_{i=1}^{\infty}\mathbf{1}_{\{\overline{H}^{(r,1)}_i<\infty\}}P_{R}(g(r+\overline{H}^{(r,1)}_i))\right]\\
&\leq C\sum\limits_{n\geq 1}\mathbb{P}_z\left[\exists i; \overline{H}^{(r,1)}_i\in(n-1,n]\right]P_{R}(g(r+n))\\
&\leq C\sum\limits_{n\geq 1}\frac{1}{\sqrt{n}}e^{-n/2}e^{(r+n)/3}\\
&\leq Ce^{r/3}
\end{aligned}
\end{equation*}
so that the contribution above is control by $Ce^{t/3}$. This is an appropriate Lipschitz constant for Equation~\eqref{eq:FirstCaseBound}.

\subsubsection{Second case: $t\in[L^x,H^x]$}
In this case, the above strategy fails for the first term $i=1$. Indeed, if both Brownian motions start near the cone $C$, the probability that they merge before either of them hitting $C$ is arbitrarily small and Equation~\eqref{eq:CouplingEstimate} cannot be reproduced. However the same argument works for $i\geq 2$ (since in this case they both have to travel from the inner envelope $E$ to the outer cone $C$). We thus have to look more carefully into the term $i=1$.

For the $i=1$ case, we use a different ``parallel'' coupling. Consider two Brownian motions, $B^1_s$ starting at $z_1$ and $B^2_s$ starting at $z_2$ (by symmetry, suppose that $z_2>z_1$) coupled as
\begin{equation*}
B^2_s=B^1_s+(z_2-z_1).
\end{equation*}

Denote by $S_1$ (resp. $S_2$) the hitting time of $B^1$ (resp. $B^2$) at the outer cone $C$. We show that
\begin{equation}\label{eq:absolutebound}
\left|\mathbb{E}\left[\mathbf{1}_{\{S_1<\infty\}}P_{R}(g(t+S_1))\right]-\mathbb{E}\left[\mathbf{1}_{\{S_2<\infty\}}P_{R}(g(t+S_2))\right]\right|\leq C(A)|z_1-z_2|e^{t/2}.
\end{equation}

By symmetry we can add the indicator of the event that $S_1<S_2$ (otherwise change $(z_1,z_2)$ into $(-z_1,-z_2)$). This is only a geometric data: with the assumption that $z_1<z_2$, the event $S_1<S_2$ is equivalent to the event that the first time any of the Brownian motions $B^1_s$ and $B^2_s$ hits the the outer cone $C$, the location is at the lower branch of $C$.

Since the above inequality is an absolute value, we should seperate into two subcases:

$\bullet$ We can choose $A$ large enough so that $P_{R}(g(\cdot))$ is strictly increasing on $\mathbb{R}_+$ by Proposition~\ref{Prop:ConeValues}. Notice that with the conditioning $S_1<S_2$ and Markov property at $S_1$,
\begin{equation*}
\begin{split}
&\quad\mathbb{E}\left[\mathbf{1}_{\{S_2<\infty\}}P_{R}(g(t+S_2))\mathbf{1}_{\{S_1<S_2\}}\right]\\
&\geq\mathbb{E}\left[\mathbf{1}_{\{S_2<\infty\}}P_{R}(g(t+S_1))\mathbf{1}_{\{S_1<S_2\}}\right]\\
&\geq\mathbb{E}\left[\mathbf{1}_{\{S_1<\infty\}}P_{R}(g(t+S_1))\mathbf{1}_{\{S_1<S_2\}}\right]\inf_{S_1>0}\mathbb{P}\left[S_2<\infty|S_1<\infty\right]
\end{split}
\end{equation*}
and thus we have
\begin{equation*}
\begin{aligned}
&\quad\mathbb{E}\left[\mathbf{1}_{\{S_1<\infty\}}P_{R}(g(t+S_1))\mathbf{1}_{\{S_1<S_2\}}\right]-\mathbb{E}\left[\mathbf{1}_{\{S_2<\infty\}}P_{R}(g(t+S_2))\mathbf{1}_{\{S_1<S_2\}}\right]\\
&\leq\left(\sup_{S_1>0}\mathbb{P}\left[S_2=\infty|S_1<\infty\right]\right)\mathbb{E}\left[\mathbf{1}_{\{S_1<\infty\}}P_{R}(g(t+S_1))\mathbf{1}_{\{S_1<S_2\}}\right].
\end{aligned}
\end{equation*}

Repeating arguments as before, we have
\begin{equation*}\label{eq:z1estimate}
\mathbb{E}\left[\mathbf{1}_{\{S_1<\infty\}}P_{R}(g(t+S_1))\mathbf{1}_{\{S_1<S_2\}}\right]\leq\mathbb{E}\left[\mathbf{1}_{\{S_1<\infty\}}P_{R}(g(t+S_1))\right]\leq C(A)e^{t/2}.
\end{equation*}

It remains to control the other conditional probability. Using the strong Markov property for $B^2$ at $S_1$, for all $S_1>0$,
\begin{equation*}
\mathbb{P}\left[S_2<\infty|S_1<\infty\right]\geq\mathbb{P}_{z_1-z_2}\left[\exists s; B_s=s\right]=e^{-2(z_2-z_1)}
\end{equation*}
by standard diffusion process identity. Indeed, for $z\leq 0$,
\begin{equation*}
u(z)=\mathbb{P}_{z}\left[\exists s; B_s=s\right]
\end{equation*}
solves the differential equation
\begin{equation*}
u''(z)-2u'(z)=0
\end{equation*}
with initial condition $u(0)=1$ and $u(-\infty)=0$. Together this proves one direction in Equation~\eqref{eq:absolutebound}

$\bullet$ Now we prove the other direction. Let $\mathcal{T}=\min\{S_2,S'_2\}$ with
\begin{equation*}
S'_2=\inf\{s>S_1;B^2_s=-f_{R}(t+s)\}.
\end{equation*}
Only the lower branch of the envelope $E$ is concerned because the assumptions $z_1<z_2$ and $S_1<S_2$ imply geometrically that at time $S_1$, the Brownian motion $B^1_{S_1}$ is located at the lower part of the cone $C$ and the other coupled Brownian motion $B^2_{S_1}$, at time $S_1$, is between the lower part of $E$ and the lower part of $C$. It follows that after time $S_1$, the Brownian motion $B^2$ first hits either the lower part of $E$ (corresponding to $S'_2$) or the lower part of $C$ (corresponding to $S_2$).


We give bounds on $\mathbb{E}\left[\mathbf{1}_{\{S_2<\infty\}}P_{R}(g(t+S_2))\mathbf{1}_{\{S_1<S_2\}}\right]$ depending on how $S_2$ compares to $S'_2$. We are going to show that in one case
\begin{equation*}
\mathbb{E}\left[\mathbf{1}_{\{S_2<\infty\}}P_{R}(g(t+S_2))\mathbf{1}_{\{S_1<S_2\}}\mathbf{1}_{\{S_2<S'_2<\infty\}}\right]\leq \mathbb{E}\left[\mathbf{1}_{\{S_1<\infty\}}P_{R}(g(t+S_1))\mathbf{1}_{\{S_1<S_2\}}\right]
\end{equation*}
and in the other case
\begin{equation*}
\mathbb{E}\left[\mathbf{1}_{\{S_2<\infty\}}P_{R}(g(t+S_2))\mathbf{1}_{\{S_1<S_2\}}\mathbf{1}_{\{S'_2<S_2<\infty\}}\right]\leq C|z_1-z_2|.
\end{equation*}
The sum of these two equalities yields a constant order Lipschitz coefficient for Equation~\eqref{eq:FirstCaseBound}.

-- For the first case, consider the Brownian motion
\begin{equation*}
\overline{B}_s=B^2_{S_1+s}-B^2_{S_1}.
\end{equation*}
Now in this case, $\mathcal{T}-S_1$ is a stopping time for $\overline{B}$. Since conditioned on the event that $S_1<\infty$ and $S_1<S_2$, $\{P_{R}(B^{2}_{S_1}+\overline{B}_u)\}_{u\geq 0}$ is a (positive) martinagle for the filtration
\begin{equation*}
\overline{\mathcal{F}}_u=\overline{\mathcal{F}}_{S_1}\cup\sigma(\overline{B}_s;s\leq u),
\end{equation*}
up until time $(\mathcal{T}-S_1)$. Fatou's lemma for the conditional expectation yields (since $P_{R}(f_{R}(\cdot))=0$)
\begin{equation*}
\begin{aligned}
&\quad\mathbb{E}\left[\mathbf{1}_{\{S_2<\infty\}}P_{R}(g(t+S_2))\mathbf{1}_{\{S_1<S_2\}}\mathbf{1}_{\{S_2<S'_2<\infty\}}\right]\\
&\leq\mathbb{E}\left[\mathbf{1}_{\{S_1<\infty\}}P_{R}(B^{2}_{S_1})\mathbf{1}_{\{S_1<S_2\}}\right]\\
&\leq\mathbb{E}\left[\mathbf{1}_{\{S_1<\infty\}}P_{R}(g(t+S_1))\mathbf{1}_{\{S_1<S_2\}}\right]
\end{aligned}
\end{equation*}
provided that $P_{R}(B^{2}_{S_1})\leq P_{R}(g(t+S_1))$ by Proposition~\ref{Prop:ConeValues}.

-- For the second case, consider
\begin{equation*}
\mathbb{E}\left[P_{R}(g(t+S_2))\mathbf{1}_{\{S_1<S'_2<S_2<\infty\}}\right].
\end{equation*}
By applying Markov property at time $S'_2$,
\begin{equation*}
\mathbb{E}\left[P_{R}(g(t+S_2))\mathbf{1}_{\{S_1<S'_2<S_2<\infty\}}\right]\leq\mathbb{P}\left[S'_2<S_2<\infty\right]\max_{s\geq t}\mathbb{E}_{f_{R}(s)}\left[P_{R}(g(H^s_1))\mathbf{1}_{\{H^{s}_1<\infty\}}\right].
\end{equation*}

The second term on the right hand side is bounded by a constant, see Equation~\eqref{eq:f2nbounded}. The rest reduces to the estimate (by applying Markov property at time $S'_2$)
\begin{equation*}
\begin{aligned}
&\quad\mathbb{P}\left[S'_2<S_2|S_1<\infty\right]\\
&=\mathbb{P}_{z_1-z_2}\left[\min\{s;B_s=f_{R}(s+t)-t-A\}<\min\{s;B_s=s\}\right]\\
&\leq\mathbb{P}_{z_1-z_2}\left[\min\{s;B_s=\epsilon(s+t)+A'(\epsilon)-t-A\}<\min\{s;B_s=s\}\right]
\end{aligned}
\end{equation*}
where we used the fact that $f_{R}(s)\leq \epsilon s+A'(\epsilon)$ for arbitrarily small $\epsilon>0$. The last probability can be shown to be smaller than $C|z_1-z_2|$ by diffusion process estimate. Indeed, it can be bounded by
\begin{equation*}
u(z_1-z_2)=\mathbb{P}_{z_1-z_2}\left[\min\{s;B_s=s+A'-A\}<\min\{s;B_s=s\}\right]
\end{equation*}
where the last term solves the differential equation
\begin{equation*}
u''(z)-2u'(z)=0
\end{equation*}
with initial conditions $u(0)=0$, $u(A'-A)=1$. Computation yields
\begin{equation*}
u(z)=\frac{1-e^{-2z}}{1-e^{-2(A'-A)}}\leq C|z|
\end{equation*}
which completes the proof.
\end{proof}

\bibliographystyle{alpha}
\bibliography{reference}

\end{document}